\documentclass[11pt,english]{amsart}

\usepackage[a4paper]{geometry}
\geometry{verbose,tmargin=2.5cm,bmargin=2.5cm,lmargin=2.5cm,rmargin=2cm}
\setcounter{secnumdepth}{5}

\usepackage{mathrsfs}
\usepackage{amssymb}
\usepackage{amsmath}
\usepackage{dsfont}
\usepackage{nccmath}
\usepackage{fancyhdr}
\usepackage{color}
\usepackage{enumerate}
\usepackage{tikz}
\newtheorem{theorem}{Theorem}[section] 
\newtheorem{lemma}[theorem]{Lemma}     

\newtheorem{proposition}[theorem]{Proposition}

\newtheorem{definition}[theorem]{Definition}

\newtheorem{remark}[theorem]{Remark}

\newtheorem*{theorem*}{Theorem}

\usepackage[bookmarksopen, naturalnames]{hyperref}



\makeatother

\begin{document}

		\title[From frequent hypercyclicity to hypercyclicity]{Growth of entire functions: from frequent hypercyclicity to hypercyclicity for the differentiation operator}
	
	\author{A. Mouze}
	\thanks{}
	\address{Augustin Mouze, Univ. Lille, \'Ecole Centrale de Lille, CNRS, UMR 8524 - Laboratoire Paul Painlev\'e  F-59000 Lille, France}
	\email{augustin.mouze@univ-lille.fr}

	\keywords{frequently hypercyclic operator, entire functions, rate of growth}
	\subjclass[2020]{47A16, 30E25, 47B37, 47B38, 30D15}
	
	\begin{abstract} We study the rate of growth of entire functions that are frequently hypercyclic with respect to some upper weighted densities for the differentiation operator. The statements obtained show the link between the minimal growth of frequently hypercyclic entire functions and that of hypercyclic entire functions. The results are optimal. 
		
	\end{abstract}
	\maketitle                   
	
	\footnotetext{The author is partially supported by the grant ANR-24-CE40-0892-01 of the French
		National Research Agency ANR (project ComOp).}
	
	\section{Introduction} Throughout this article, we use the convention $\mathbb{N}:=\{1,2,3,\dots\}$. A continuous and linear operator $T$ on a separable Fr\'echet space $X$ is \textit{hypercyclic} if there exists $x\in X$ whose orbit $\{T^nx : n\in\mathbb{N}\}$ is dense in $X$. Such a vector $x$ is called a hypercyclic (or universal) element. This notion has been studied extensively and has led to many interesting results. We refer to the books \cite{BayMath,grossebook} and the references therein. Recently more restrictive notions have been introduced, those of frequent hypercyclicity \cite{BayGriv} and $\mathscr{U}$-frequent hypercyclicity \cite{Shka}, which measure in some way the frequency of visits of every non-empty open set by the orbits in terms of lower or upper density. The \textit{lower} and \textit{upper natural density} of a subset $A\subset \mathbb{N}$ are defined as 
	$$\underline{d}(A)=\liminf_{n\rightarrow \infty}\frac{\#(A\cap [1,n])}{n}\quad \hbox{and}\quad
	\overline{d}(A)=\limsup_{n\rightarrow \infty}\frac{\#(A\cap [1,n])}{n},$$ 
	respectively. 
	
	\begin{definition}{\rm Let $X$ be a separable Fr\'echet space and let $T:X\rightarrow X$ be an operator. A vector $x\in X$ is called \textit{frequently hypercyclic} (resp. $\mathscr{U}$\textit{-frequently hypercyclic}) for $T$ if, for every non-empty open subset $U$ of $X$ 
			$$\underline{d}(\{n\in \mathbb{N} : T^nx\in U\})>0\quad (\hbox{resp. } \overline{d}(\{n\in \mathbb{N} : T^nx\in U\})>0). $$
			The operator $T$ is called \textit{frequently hypercyclic} (resp. $\mathscr{U}$\textit{-frequently hypercyclic}) if it possesses a \textit{frequently hypercyclic} vector (resp. a $\mathscr{U}$\textit{-frequently hypercyclic} vector).} 
	\end{definition}
	
	Clearly a frequently hypercyclic operator is $\mathscr{U}$-frequently hypercyclic and a $\mathscr{U}$-frequently hypercyclic operator is hypercyclic. Classical example of frequently hypercyclic operator is the differentiation operator $D:H(\mathbb{C})\rightarrow H(\mathbb{C})$, given by $f\mapsto f'$ on the space $H(\mathbb{C})$ of entire functions on the complex plane $\mathbb{C}$, equipped with the standard compact-open topology \cite{McLane}. In this note, we are investigating the minimal growth rates, in terms of $L^p$-averages, of hypercyclic functions for $D$, considering the frequency with which their orbits visit non-empty open sets. Thus, for an entire function $f$ and $1\leq p\leq\infty$, let us consider, for $r>0$,
	$$M_p(f,r)=\left(\frac{1}{2\pi}\int_0^{2\pi}\vert f(re^{it})\vert ^p\right)^{1/p}\quad\hbox{ and }\quad M_{\infty}(f,r)=\sup_{\vert z\vert=r}\vert f(z)\vert.$$
	In the sequel, for $1\leq p\leq\infty$, let us denote by $q$ the conjugate exponent of $p$, i.e. $\frac{1}{p}+\frac{1}{q}=1$ (by convention $q=\infty$ if $p=1$ and $q=1$ if $p=\infty$). 
	The case of growth rate of hypercyclic functions for $D$ (or $D$-hypercyclic functions) was investigated in \cite{BBG,ge1,ge11}. The optimal result is given by the following statement.
	
	\begin{theorem}\label{DHC}\cite[Theorem 2.1]{BBG} Let $1\leq p\leq\infty$. For any function $\varphi:\mathbb{R}_+\rightarrow\mathbb{R}_+$ with $\varphi(r)\rightarrow\infty$ as $r\rightarrow\infty$, there is a $D$-hypercyclic entire function $f$ with $M_p(f,r)\leq \varphi (r)e^r r^{-1/2}$ for $r$ sufficiently large. On the other hand, every $D$-hypercyclic entire function $f$ satisfies $\limsup\limits_{r\rightarrow\infty}\left(e^{-r}\sqrt{r}M_p(f,r)\right)=\infty$. 
	\end{theorem}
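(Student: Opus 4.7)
I argue by contrapositive: suppose that $\limsup_{r\to\infty}e^{-r}\sqrt{r}\,M_p(f,r)<\infty$, so there exist constants $C>0$ and $r_0>0$ with $M_p(f,r)\le Ce^r r^{-1/2}$ for all $r\ge r_0$. Writing $f(z)=\sum_{m\ge 0}a_m z^m$, the Cauchy coefficient estimate $|a_m|r^m\le M_1(f,r)$ together with the monotonicity $M_1\le M_p$ gives $|a_m|\le Ce^r r^{-m-1/2}$ for every $r\ge r_0$. Minimising the right-hand side in $r$ (the optimum is attained near $r=m+\tfrac12$) and applying Stirling's formula yields a constant $C'>0$ with $|a_m|\le C'/m!$ for every $m$. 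Then for every $n\ge 0$ and every $z\in\mathbb{C}$,
\[
|D^n f(z)|\le \sum_{k\ge 0}|a_{n+k}|\frac{(n+k)!}{k!}|z|^k \le C'\sum_{k\ge 0}\frac{|z|^k}{k!}=C'e^{|z|}.
\]
Hence the whole orbit $\{D^nf:n\in\mathbb{N}\}$ is uniformly bounded on every compact subset of $\mathbb{C}$, contradicting $D$-hypercyclicity: the constant function $g\equiv 3C'e$ cannot be uniformly approximated on $\overline{D(0,1)}$ by any function bounded there by $C'e$.

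\textbf{Construction.} I produce $f$ as a gap series targeting, inductively, a dense sequence of polynomial goals, in the spirit of Birkhoff's construction. Fix a dense sequence $(P_k)_{k\ge 1}$ of polynomials in $H(\mathbb{C})$, write $P_k(z)=\sum_{j=0}^{d_k}\alpha_{k,j}z^j$, and take the exhaustion $K_k:=\overline{D(0,k)}$. Given a rapidly increasing integer sequence $(N_k)$ still to be chosen, set
\[
Q_k(z):=\sum_{j=0}^{d_k}\frac{j!\,\alpha_{k,j}}{(N_k+j)!}\,z^{N_k+j},\qquad\text{so that}\qquad D^{N_k}Q_k\equiv P_k.
\]
The $N_k$ are chosen recursively with three requirements at step $k$: (i) $N_k>N_{k-1}+d_{k-1}$, which forces $D^{N_k}Q_j\equiv 0$ for every $j<k$; (ii) at each later step $j>k$, $N_j$ is picked so large that $\sup_{K_k}|D^{N_k}Q_j|<2^{-j}$, so that $D^{N_k}f\to P_k$ uniformly on $K_k$ and $f:=\sum_k Q_k$ is $D$-hypercyclic; (iii) $N_k$ is also large enough that the block prefactor $A_k:=(d_k+1)\,d_k!\,\max_j|\alpha_{k,j}|$ satisfies $A_k\le\tfrac12\sqrt{2\pi}\,\varphi(r)$ for every $r\ge N_k/2$, which is possible because $\varphi\to\infty$.

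\textbf{Growth estimate and main obstacle.} The growth of $f$ rests on the sharp Stirling-type bound
\[
\frac{r^m}{m!}\le \frac{e^r}{\sqrt{2\pi r}}\,\sqrt{s}\,\exp\bigl(m(1-s+\log s)\bigr),\qquad s:=\frac{r}{m},
\]
together with the fact that $1-s+\log s<0$ for $s\ne 1$. Hence, for each $r$, only the (at most two) blocks whose frequency $N_k$ lies close to $r$ contribute a term of order $e^r/\sqrt{r}$, and all remaining blocks contribute terms exponentially smaller in $\min(N_k,N_{k+1})$. Summing these contributions and using the prefactor bound in (iii), one gets $M_\infty(f,r)\le \varphi(r)e^r/\sqrt{r}$ for $r$ large; since $M_p\le M_\infty$ for every $p\in[1,\infty]$, the same estimate holds for all $p$ simultaneously. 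The principal obstacle is to ensure the inequality on the full range $r\in[N_k+d_k,N_{k+1}]$, not merely near the peaks $r=N_k$: this forces the gaps $N_{k+1}-N_k$ to grow fast enough that the exponential damping $e^{N_k(1-s+\log s)}$ with $s=r/N_k$ absorbs the prefactors from both neighbouring blocks and from the hypercyclicity constraints (i)--(ii). The remainder is careful bookkeeping with no further conceptual difficulty.
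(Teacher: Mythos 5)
This statement is quoted from \cite[Theorem 2.1]{BBG}; the paper gives no proof of it, so there is nothing internal to compare your argument against. Judged on its own terms, your proposal is essentially the classical argument. The lower bound is correct and complete: the Cauchy estimate through $M_1\le M_p$ plus optimisation at $r=m+\tfrac12$ does give $|a_m|\le C'/m!$, and boundedness of the orbit on compacta contradicts density. Note you can shortcut the last step: $D^nf(0)=n!\,a_n$ is then a bounded sequence in $\C$, whereas hypercyclicity forces $\{D^nf(0):n\in\N\}$ to be dense in $\C$; no estimate of $D^nf$ on a disc is needed.

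The construction half is the standard Birkhoff/MacLane-type gap series and the skeleton (blocks $Q_k$ with $D^{N_k}Q_k=P_k$, disjoint supports, the bound $\frac{r^m}{m!}\le\frac{e^r}{\sqrt{2\pi r}}\sqrt{s}\,e^{m(1-s+\log s)}$, and the observation that $M_p\le M_\infty$ reduces everything to $p=\infty$) is sound. What you leave as ``bookkeeping'' is, however, the actual content of the existence proof: one must verify simultaneously that (a) for $r$ in the dead zones between consecutive frequencies the sum of the exponentially damped tails over \emph{all} $k$ (not just the two neighbouring blocks) stays below $\varphi(r)e^r r^{-1/2}$, which requires $N_{k+1}$ to dominate quantities depending on $A_1,\dots,A_k$; and (b) these growth requirements are compatible with the approximation requirement (ii), which also pushes $N_{k+1}$ upward — fortunately both constraints push in the same direction, so the recursion closes. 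Since $\varphi$ is not assumed monotone, condition (iii) should be phrased via $\inf_{r\ge N_k/2}\varphi(r)\to\infty$, as you implicitly do. With those details written out, the argument is a correct proof of the cited theorem, by essentially the same route as \cite{BBG} (whose existence half is likewise a lacunary series with Stirling-type peak estimates), rather than a genuinely different one.
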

	
	The case of growth rate of $D$-frequently hypercyclic functions was investigated in \cite{BBG,BB,Drasin}. The optimal result is given by the following statement. For $1\leq p\leq\infty$ let us denote 
	$$a(p):=\frac{1}{2\min(2,p)}=\max\left(\frac{1}{2p},\frac{1}{4}\right) =\frac{1}{2}-\frac{1}{2\max(2,q)}.$$
	Observe that for $p=1$  we have $a(1)=1/2$.
	
	\begin{theorem}\label{DFHC}\cite[Theorem 1.1]{Drasin} \begin{enumerate}
			\item Given $c>0$ and $p\in (1,\infty]$, there is an entire $D$-frequently hypercyclic function $f$ with $M_p(f,r)\leq ce^r r^{-a(p)}$ for all $r>0$. This estimate is optimal: every such function satisfies $\limsup\limits_{r\rightarrow\infty}\left(r^{a(p)}e^{-r}M_p(f,r)\right)>0$.
			\item  For any function $\varphi:\mathbb{R}_+\rightarrow\mathbb{R}_+$ with $\varphi(r)\rightarrow\infty$ as $r\rightarrow\infty$, there is an entire $D$-frequently hypercyclic function $f$ with $M_1(f,r)\leq \varphi(r)e^r r^{-1/2}$ for all $r>0$. This estimate is optimal: every such function satisfies $\limsup\limits_{r\rightarrow\infty}\left(\sqrt{r}e^{-r}M_1(f,r)\right)=\infty$.
		\end{enumerate}
	\end{theorem}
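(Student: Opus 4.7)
\emph{Strategy.} The theorem combines two sharp existence statements (for $p>1$ with a constant prefactor, for $p=1$ with a slow prefactor $\varphi$) with two matching optimality statements. I plan to handle the lower bounds by extracting from frequent hypercyclicity the coefficient inequality $|b_n|\ge 1/(2n!)$ on a positive-density set $A\subset\N$ and then translating this into $M_p$-lower bounds via Parseval or Hausdorff--Young; the upper bounds I plan to handle by constructing $f$ via a blockwise Taylor series tailored to the Frequent Hypercyclicity Criterion.

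\emph{Lower bounds.} Let $f(z)=\sum_n b_n z^n$ be $D$-frequently hypercyclic. Applying the definition to the open set $U=\{g\in H(\C):\sup_{|z|\le 1}|g(z)-1|<1/2\}$ yields $A:=\{n:D^n f\in U\}$ with $\underline{d}(A)>0$ and, since $D^n f(0)=n!\,b_n$, $|b_n|\ge 1/(2n!)$ on $A$. For $p\ge 2$, Parseval combined with $M_2\le M_p$ gives $M_p(f,r)^2\gtrsim\sum_{n\in A}r^{2n}/(n!)^2$; for $p\in(1,2]$, Hausdorff--Young gives $M_p(f,r)^q\gtrsim\sum_{n\in A}r^{nq}/(n!)^q$ with $q=p/(p-1)$. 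A Stirling analysis shows $r^n/n!$ concentrates near $n\approx r$ at height $\sim e^r/\sqrt{2\pi r}$ over an effective window of width $\asymp\sqrt r$; positive lower density of $A$ ensures that this window captures $\gtrsim\sqrt r$ indices of $A$ for arbitrarily large $r$, and the resulting sum is $\gtrsim\sqrt r\cdot(e^r/\sqrt r)^s$ with $s=\max(2,q)$. Taking $s$-th roots yields $M_p(f,r)\gtrsim e^r r^{-a(p)}$ along a subsequence, hence $\limsup r^{a(p)}e^{-r}M_p(f,r)>0$. For $p=1$ this chain degenerates: Cauchy's estimate $|b_n|r^n\le M_1(f,r)$ only produces $\limsup\sqrt r e^{-r}M_1(f,r)\gtrsim 1$. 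To upgrade to $=\infty$, I would apply frequent hypercyclicity to a neighbourhood of $z^k$ for every $k\in\N$: the resulting positive-density set $A_k$ gives $|b_{n+k}|\ge k!/(2(n+k)!)$, so Cauchy at $r=n+k$ yields $\sqrt r e^{-r}M_1(f,r)\gtrsim k!/(2\sqrt{2\pi})$ along a subsequence; letting $k\to\infty$ forces the limsup to infinity.

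\emph{Constructions and the main obstacle.} Fix a dense sequence of polynomials $(P_k)$ and pairwise disjoint sets $N_k\subset\N$ with $\underline{d}(N_k)>0$, elements of each $N_k$ spaced by more than $\deg P_k$. Writing $P_k(z)=\sum_j c_{k,j}z^j$, I set $b_{n+j}:=c_{k,j}\,j!/(n+j)!$ for $n\in N_k$ and $b_m:=0$ otherwise; this produces a $D$-frequently hypercyclic function $f=\sum_m b_m z^m$ whose Taylor coefficients encode the prescribed visits. For $p\ge 2$ the upper bound $M_p(f,r)\le c e^r r^{-a(p)}$ follows from the dual Hausdorff--Young inequality $M_p(f,r)\le\|b_n r^n\|_{\ell^q}$ with $q\le 2$, which the same Stirling computation reduces to $e^r r^{-a(p)}$ after summing over $k$ with densities $\underline{d}(N_k)$ decreasing sufficiently fast. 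I expect the genuinely hard piece of the proof to be the construction for $p\in[1,2)$: the naive bound $M_p\le M_2$ only gives $e^r r^{-1/4}$, whereas $a(p)=1/(2p)\ge 1/4$ requires a strict improvement through cancellation in the $L^p$-norm on the circle. Since Khintchine's inequality forbids random signs from beating the $L^2$-scale, the Drasin-type construction in \cite{Drasin} must design the intra-block coefficients so that, at each scale $r$, the oscillations partially conspire to reduce $M_p$; the slack $\varphi(r)$ in part (2) then absorbs the endpoint loss at $p=1$.
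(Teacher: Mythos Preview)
This theorem is not proved in the present paper; it is quoted from Drasin--Saksman \cite{Drasin}. The paper does, however, reproduce and adapt the Drasin--Saksman construction in Section~\ref{section3}, so one can compare your proposal to that method.

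Your lower-bound arguments are essentially correct and follow the line of \cite{BBG}: extract a positive-density coefficient set from the visit condition, feed it through Parseval/Hausdorff--Young, and count how many indices of $A$ fall into the Stirling window $[r-C\sqrt r,\,r+C\sqrt r]$. The one step you leave implicit---that positive lower density forces $|A\cap[r,r+\sqrt r]|\gtrsim\sqrt r$ for arbitrarily large $r$---is an easy covering argument, and your $p=1$ upgrade via neighbourhoods of $z^k$ is also fine.

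The construction side has a genuine gap, and you have the difficulty located in the wrong range. For $p\ge 2$ (so $q\le 2$) the dual Hausdorff--Young inequality gives
\[
M_p(f,r)\le\Bigl(\sum_n |b_n|^q r^{qn}\Bigr)^{1/q},
\]
and with $|b_n|\lesssim 1/n!$ on a set of positive density the right-hand side is of order $e^r r^{(1-q)/(2q)}$, by the same Stirling window count. For $q=2$ this is $e^r r^{-1/4}$ as desired, but for $q=1$ (i.e.\ $p=\infty$) it is $e^r$ with no negative power of $r$ at all---this is just the triangle inequality $M_\infty\le\sum|b_n|r^n$. No choice of densities $\underline d(N_k)$ can manufacture the missing factor $r^{-1/4}$, because shrinking the density only changes the constant, not the exponent of $r$. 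So your claimed construction fails for every $p>2$.

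The missing idea is precisely the Rudin--Shapiro polynomials of Lemma~\ref{lemma_rud_shap}: inside each block of length $\asymp\sqrt r$ near $n\approx r$, the coefficients are taken to be $\pm 1/n!$ with Rudin--Shapiro signs, so that the $L^\infty$-norm of the block on $|z|=r$ is $\asymp\sqrt{\sqrt r}\cdot e^r/\sqrt r=e^r r^{-1/4}$ rather than the trivial $\sqrt r\cdot e^r/\sqrt r=e^r$. This is exactly how \cite{Drasin} (and Section~\ref{section3} of the present paper) obtains the exponent $a(p)=1/4$ for $p\ge 2$; for $1<p<2$ the companion polynomials $p_N^*$ of Lemma~\ref{lemma_rud_shap}(2) play the same role. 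Your closing paragraph correctly senses that cancellation beyond $L^2$ is needed, but it is needed already at $p=\infty$, and the specific device is Rudin--Shapiro, not an unspecified ``intra-block design''.
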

	
	Thus for $p>1$ these theorems allow us to make a distinction between $D$-hypercyclic functions and $D$-frequently hypercyclic functions. We can then interested in the growth rate of $\mathscr{U}$-frequently hypercyclic functions for $D$. Actually, it won't be difficult to justify that these functions and the $D$-frequently hypercyclic functions share the same optimal rate of growth. Therefore, in order to understand the difference of growth between $\mathscr{U}$-frequently hypercyclic and hypercyclic functions for $D$, we introduce intermediate notions of linear dynamics between $\mathscr{U}$-frequent hypercyclicity and hypercyclicity replacing the usual upper density by specific upper weighted densities. 
	
	\begin{definition} {\rm Let $\beta=(\beta_n)$ be a sequence of positive real numbers such that $\sum\limits_{k=1}^n\beta_k\rightarrow \infty$ as $n$ tends to $\infty$. For a subset $E\subset \mathbb{N}$, its upper $\beta$-density is given by
			$$\overline{d}_{\beta}(E)=\limsup_{n\rightarrow +\infty}\frac{\sum_{k=1}^n \beta_k\mathds{1}_E(k)}{\sum_{k=1}^n \beta_k},$$
			where $\mathds{1}_E(k)=1$ if $k\in E$ and $0$ otherwise.} 
	\end{definition}
	
	These quantities enjoy all the classical properties of asymptotic densities (see \cite{ErMo1,FrSa}) and allow to define dynamical notions of the same nature as the $\mathcal {U}$-frequent hypercyclicity. 
	
	\begin{definition} {\rm Let $\beta=(\beta_n)$ be a sequence of positive real numbers such that $\sum\limits_{k=1}^n\beta_k\rightarrow \infty$ as $n$ tends to $\infty$ and let $E$ be a subset of $\mathbb{N}$. Let $X$ be a separable Fr\'echet space and let $T:X\rightarrow X$ be an operator. The operator $T$ is said to be \textit{$\mathscr{U}_{\beta}$-frequently hypercyclic} if there exists $x\in X$ such that for every non-empty open subset $U\subset X$, $\overline{d}_{\beta}(\{n\in\mathbb{N}\ :\ T^nx\in U\})>0.$}
	\end{definition}
	
	Let us consider a density scale depending on a continuous parameter $\gamma$ given by the sequence of weights $\beta^\gamma=(e^{n^\gamma})$, with $0\leq \gamma\leq 1$, that connects the notions of $\mathscr{U}$-frequent hypercyclicity and hypercyclicity. Indeed the $\mathscr{U}_{\beta^{0}}$-frequent hypercyclicity coincides with the $\mathscr{U}$-frequent hypercyclicity and the $\mathscr{U}_{\beta^{1}}$-frequent hypercyclicity coincides with the hypercyclicity. Moreover, for any $0\leq \gamma\leq\gamma'\leq 1$ and for any subset $E\subset \mathbb{N}$, the following chain of inequalities $\overline{d}(E)=\overline{d}_{\beta^{0}}(E)\leq \overline{d}_{\beta^{\gamma}}(E)\leq\overline{d}_{\beta^{\gamma'}}(E)\leq\overline{d}_{\beta^{1}}(E)$ shows that the notions of $\mathscr{U}_{\beta^{\gamma}}$-frequent hypercyclicity for $\gamma\in(0,1)$ provide a refinement of the theory of linear dynamics between $\mathscr{U}$-frequent hypercyclicity and hypercyclicity. We refer to \cite[Section 4.1]{MouMunder}. \\
	
	For $p\in[1,\infty]$ and $0\leq \gamma\leq 1$, we set 
	$$\alpha(p,\gamma):=\frac{1}{2}-\frac{\min(1,2(1-\gamma))}{2\max(2,q)}.$$
	Observe that for $0\leq \gamma\leq \frac{1}{2}$, the exponent $\alpha(p,\gamma)$ is the exponent $a(p)$ of Theorem \ref{DFHC} and for $\gamma=1$, we have  $\alpha(p,1)=1/2$ which is the exponent of the non-exponential term of the rate of growth given in Theorem \ref{DHC}.\\
	
	The aim of the paper is to prove the following general result which links the growth of $\mathscr{U}$-frequently hypercyclic functions with that of hypercyclic functions for the differentiation operator. In view of the statement, this result thus connects Theorem \ref{DHC} with Theorem \ref{DFHC}.
	
	\begin{theorem}\label{maintheorem} Let $0\leq\gamma\leq 1$. 
		\begin{enumerate}
			\item Let $p\in (1,\infty]$.
			\begin{enumerate}
			\item If $0\leq\gamma<1$, given $c>0$ and $p\in (1,\infty]$, there is an entire $\mathscr{U}_{\beta^\gamma}$-frequently hypercyclic function $f$ for D with 
			$$M_p(f,r)\leq c\frac{e^r}{r^{\alpha(p,\gamma)}}.$$
			This estimate is optimal: every such function satisfies $\limsup\limits_{r\rightarrow\infty}\left(r^{\alpha(p,\gamma)}e^{-r}M_p(f,r)\right)>0$.
			\item If $\gamma=1$, given $\psi:\mathbb{R}_+\rightarrow \mathbb{R}_+$ be any increasing function with $\psi(r)\rightarrow \infty$ as $r\rightarrow\infty$, there is an entire ($\mathscr{U}_{\beta^1}$-frequently) hypercyclic function $f$ for $D$ with 
			$$M_p(f,r)\leq \psi(r)\frac{e^r}{r^{1/2}}.$$
			This estimate is optimal: every such function satisfies $\limsup\limits_{r\rightarrow\infty}\left(r^{1/2}e^{-r}M_p(f,r)\right)=\infty$.
		\end{enumerate}
			\item Let $p=1$. Given $\psi:\mathbb{R}_+\rightarrow \mathbb{R}_+$ be any increasing function with $\psi(r)\rightarrow \infty$ as $r\rightarrow\infty$, there is an entire $\mathscr{U}_{\beta^\gamma}$-frequently hypercyclic function $f$ for $D$ with 
			$$M_1(f,r)\leq \psi(r)\frac{e^r}{r^{1/2}}.$$
			This estimate is optimal: every such function satisfies $\limsup\limits_{r\rightarrow\infty}\left(r^{1/2}e^{-r}M_p(f,r)\right)=\infty$. 
		
		\end{enumerate}
	\end{theorem}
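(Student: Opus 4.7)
The plan is to split the theorem into three groups. Part (1)(b), with $\gamma=1$, is exactly Theorem \ref{DHC} restricted to $p\in(1,\infty]$, since $\mathscr{U}_{\beta^{1}}$-frequent hypercyclicity coincides with hypercyclicity. For Part (2), the function built in Theorem \ref{DFHC}(2) is $\mathscr{U}$-frequently hypercyclic and hence a fortiori $\mathscr{U}_{\beta^{\gamma}}$-frequently hypercyclic, while the optimality follows from Theorem \ref{DHC} via the chain $\mathscr{U}_{\beta^{\gamma}}\text{-FHC}\Rightarrow\text{hypercyclicity}$. The essential content is therefore Part (1)(a) with $\gamma\in[0,1)$ and $p\in(1,\infty]$; and since $\alpha(p,\gamma)=a(p)$ for $\gamma\in[0,1/2]$, the existence in that subrange is immediate from Theorem \ref{DFHC}(1). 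The new tasks are thus (i) the optimality of $\alpha(p,\gamma)$ for every $\gamma\in(0,1)$, which, even for $\gamma\in(0,1/2]$, is \emph{strictly stronger} than Theorem \ref{DFHC}(1) because the class of $\mathscr{U}_{\beta^{\gamma}}$-frequently hypercyclic functions is larger, and (ii) the construction of $\mathscr{U}_{\beta^{\gamma}}$-frequently hypercyclic functions of growth $ce^{r}r^{-\alpha(p,\gamma)}$ when $\gamma\in(1/2,1)$.

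For (i), I would follow the coefficient-bound paradigm of \cite{Drasin,BBG}. Given a $\mathscr{U}_{\beta^{\gamma}}$-frequently hypercyclic $f(z)=\sum_{n}a_{n}z^{n}$, applying the definition to a small neighbourhood of the constant function $1$ yields
\[
E:=\{n\in\mathbb{N}:|n!\,a_{n}-1|<1/2\}\qquad\text{with}\qquad\overline{d}_{\beta^{\gamma}}(E)>0,
\]
so $|a_{n}|\geq 1/(2n!)$ on $E$. Combining Parseval (for $p\geq 2$, via $M_{p}\geq M_{2}$) or Hausdorff--Young (for $p\in(1,2]$) with Stirling's formula to localise the terms $r^{n}/n!$ inside a Gaussian window $I_{r}=[r-C\sqrt{r},r+C\sqrt{r}]$, the optimality reduces to producing a sequence $r_{k}\to\infty$ with $\#(E\cap I_{r_{k}})\gtrsim r_{k}^{\min(1/2,\,1-\gamma)}$. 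For $\gamma\in(1/2,1)$ the weights $e^{k^{\gamma}}$ concentrate: up to logarithmic factors, the $\beta^{\gamma}$-mass of $[1,n_{k}]$ lives on $[n_{k}-Cn_{k}^{1-\gamma}\log n_{k},n_{k}]\subset I_{n_{k}}$, so $\overline{d}_{\beta^{\gamma}}(E)>0$ forces $\gtrsim n_{k}^{1-\gamma}$ elements of $E$ inside $I_{n_{k}}$. For $\gamma\in[0,1/2]$ the weights spread out, but a weighted pigeonhole over $\sqrt{n_{k}}$ subintervals of length $\sqrt{n_{k}}$ in $[1,n_{k}]$ produces one subinterval $I$ with $\#(E\cap I)\gtrsim\sqrt{n_{k}}$, and I take $r_{k}$ to be its centre.

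For (ii), in the range $\gamma\in(1/2,1)$, I would use an explicit block construction in the style of \cite{BayMath,BBG}. Pick a dense sequence $(P_{j})$ of polynomials in $H(\mathbb{C})$, select visit times $(N_{j})$ forming a set of positive $\beta^{\gamma}$-density but spaced more sparsely than is permitted by $\mathscr{U}$-density, and define $f(z)=\sum_{j}c_{j}z^{N_{j}}Q_{j}(z)$, where $Q_{j}$ is a suitably shifted copy of $P_{j}$ and $c_{j}\sim 1/N_{j}!$, so that $f^{(N_{j})}$ approximates $P_{j}$ on compacta. Because the $\beta^{\gamma}$-condition tolerates sparser visit sets than the $\mathscr{U}$-condition, the Taylor coefficients can be arranged thinly enough that $M_{p}(f,r)\leq ce^{r}r^{-\alpha(p,\gamma)}$; the $L^{p}$-norms of each block are estimated block by block, with random signs and Khintchine-type inequalities deployed when $p<\infty$.

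The main obstacle is the sharp matching between the two natural scales: the $\beta^{\gamma}$-concentration window (of width $\sim n^{1-\gamma}$ modulo logarithms) and the Gaussian width $\sqrt{r}$ of the dominant contributions to $M_{p}$. These coincide precisely at $\gamma=1/2$, and the logarithmic corrections on either side must be absorbed by the freedom of passing to a subsequence $r_{k}$; otherwise they would spoil the exponent $\alpha(p,\gamma)$. A mirror delicacy on the construction side is the calibration of the sparsity of $(N_{j})$ so that the visit set has positive $\beta^{\gamma}$-density while adjacent blocks do not overlap enough to violate the individual coefficient bound $|a_{n}|\lesssim 1/n!$ on the visit set.
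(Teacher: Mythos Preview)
Your reduction is correct and matches the paper: parts (1)(b), (2), and the existence half of (1)(a) for $\gamma\in[0,1/2]$ all follow from Theorems~\ref{DHC} and~\ref{DFHC} together with the chain $\underline{d}\leq\overline{d}=\overline{d}_{\beta^0}\leq\overline{d}_{\beta^\gamma}\leq\overline{d}_{\beta^1}$.

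For the optimality (i), your approach is close in spirit to the paper's Theorem~\ref{sens1}, but the paper organises the argument differently. Rather than fixing a Gaussian window $[r-C\sqrt r,\,r+C\sqrt r]$ and counting elements of $E$ inside it, the paper integrates the Hausdorff--Young inequality over an interval $[m_l-m_l^{1-\gamma},\,m_l+m_l^{1-\gamma}]$ of width $2m_l^{1-\gamma}$ and shows that every $n\in E$ in this window contributes a bounded-below constant to the integral (the exponent $\alpha(p,\gamma)$ is chosen precisely so that this constant is independent of $n$). This avoids your secondary pigeonhole step when $\gamma\leq 1/2$. Your concern about logarithmic corrections is unfounded: as the paper's estimates \eqref{estim4sums0} show, the $\beta^\gamma$-mass concentrates on $[m_l-m_l^{1-\gamma},m_l]$ with no log factor, because $(m_l-m_l^{1-\gamma})^\gamma-m_l^\gamma\to-\gamma$ exactly.

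Your construction (ii) has a genuine gap. A set of positive $\beta^\gamma$-density must contain, along a subsequence $m_l$, at least $\sim m_l^{1-\gamma}$ elements in $[m_l-m_l^{1-\gamma},m_l]$; in your scheme $f=\sum_j c_j z^{N_j}Q_j(z)$ these visit times therefore cluster with gaps of bounded order. If each $N_j$ in a cluster carries its own target $P_j$, then for $N_i>N_j$ in the same cluster the interference term $D^{N_j}\bigl(c_iz^{N_i}Q_i\bigr)$ is of size $O(1)$ on compacta, not small, so $D^{N_j}f$ does not approximate $P_j$. Random signs and Khintchine do not repair this: they control the $L^p$-norm of the sum, not the value of any individual derivative. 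The paper (following Drasin--Saksman \cite{Drasin}) resolves both issues at once by making all visit times within a block aim at the \emph{same} polynomial $q_k$, spacing them by a large step $\alpha_k$ (so that interference decays factorially), and packaging the $\pm1$ signs into a Rudin--Shapiro polynomial $p_{\lfloor n^{2(1-\gamma)}/\alpha_k\rfloor}$ of degree $\sim n^{2(1-\gamma)}$; the Rudin--Shapiro $L^p$-bound then gives exactly the saving $r^{-(1-\gamma)/\max(2,q)}$ over $e^r/\sqrt r$. The crucial parameter you are missing is this block length $n^{2(1-\gamma)}$ (replacing $n$ in the frequently hypercyclic case), which is what ties the density scale to the growth exponent.
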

	
	Theorems \ref{DHC} and \ref{DFHC} cover the cases where $p=1$ or $\gamma=1$. For $p>1$ the preceding statement reveals that the optimal growth rate for  $\mathscr{U}_{\beta^{\gamma}}$-frequently hypercyclic functions for $D$ is continuously dependent on the parameter $\gamma$. To be more precise, for $0\leq \gamma\leq 1/2$ the $\mathscr{U}_{\beta^{\gamma}}$-frequently hypercyclic functions for $D$ can grow as slowly as the $\mathscr{U}$-frequently hypercyclic functions (and even as the frequently hypercyclic functions). Yet, starting from $\gamma=1/2$ this rate of growth depends on an affine function of the parameter, eventually reaching that of hypercyclic functions. Furthermore, in \cite{MouMunder}, the authors explored weighted Taylor shifts on the unit disk. In the same spirit they demonstrated, utilizing the same scale of densities, a continuous progression between the growth of $\mathscr{U}$-frequently hypercyclic functions and that of hypercyclic functions for theses Taylor shifts. However the key distinction lies in the fact that for these Taylor shifts on the unit disk, the modification in the rate of growth immediately occurred from the $\mathscr{U}$-frequent hypercyclicity, i.e. from $\gamma=0$, in contrast to our case of the differentiation operator where it begins at $\gamma=1/2$.\\

	The paper is organized as follows: in Section \ref{section2} we establish admissible growth in terms of $L^p$-averages of entire functions $\mathscr{U}_{\beta^{\gamma}}$-frequently hypercyclic functions for $D$ according to the value of the parameter $\gamma$. In Section \ref{section3} we construct as in \cite{Drasin} entire functions that show the optimality of the admissible growth obtained in the previous section. Finally we can prove Theorem \ref{maintheorem}.

	\section{Rate of growth of $\mathscr{U}$-frequently hypercyclic functions}\label{section2}
	Let $0\leq\gamma<1$. In this section, we study lower estimates on the possible growth rates for $\mathscr{U}_{\beta^\gamma}$-frequently hypercyclic functions for $D$. Notice that the case $\beta=0$ corresponds to the case of $\mathscr{U}$-frequent hypercyclicity. The method employed is inspired by that of the proof of \cite[Theorem 2.4]{BBG}. Actually the following statement holds.\\ 
	
	First of all, let us mention that throughout the paper we will use the following estimate obtained thanks to an integral comparison test:   
	\begin{equation}\label{estimafunda}
		\hbox{for }0<\gamma<1,\quad \displaystyle \sum_{k=1}^n e^{k^{\gamma}}\underset{n\rightarrow +\infty}{\sim} \frac{n^{1-\gamma}}{\gamma}e^{n^{\gamma}}.	
	\end{equation}
	
	\begin{theorem}\label{sens1}
		Let $1< p\leq\infty$, $q$ the conjugate exponent of $p$, $0\leq \gamma < 1$ and set $\alpha(p,\gamma)=\frac{1}{2}-\frac{\min(1,2(1-\gamma))}{2\max(2,q)}$. Let $\psi:\mathbb{R}_+\rightarrow \mathbb{R}_+$ be any decreasing function with $\psi(r)\rightarrow 0$ as $r\rightarrow\infty$. Then there is no $\mathscr{U}_{\beta^\gamma}$-frequently hypercyclic entire function $f$ for the differentiation operator that satisfies 
			\begin{equation}\label{estimlimsupgamma}
				M_p(f,r)\leq \psi(r)\frac{e^r}{r^{\alpha(p,\gamma)}}\ \ \hbox{ for }r>0\hbox{ sufficiently large.}
			\end{equation}
		\end{theorem}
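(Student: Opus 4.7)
I would follow the strategy of \cite[Theorem~2.4]{BBG}: extract from the $\mathscr{U}_{\beta^\gamma}$-frequent hypercyclicity a coefficient lower bound on a large set $A\subset\N$, aggregate these bounds through a Parseval / Hausdorff--Young type inequality, and obtain a pointwise lower estimate on $M_p(f,r)$ along a sequence $r=M_j\to\infty$ that contradicts \eqref{estimlimsupgamma}. Concretely, assume for contradiction that $f=\sum_n a_n z^n$ is $\mathscr{U}_{\beta^\gamma}$-frequently hypercyclic for $D$ and satisfies \eqref{estimlimsupgamma}. Taking the neighbourhood $U=\{g\in H(\C):|g(0)-1|<1/2\}$ of the constant function $1$, the set $A:=\{n\in\N:D^nf\in U\}$ satisfies $\overline{d}_{\beta^\gamma}(A)\geq\delta>0$, and the identity $(D^nf)(0)=n!\,a_n$ yields $|a_n|\geq 1/(2\,n!)$ for every $n\in A$.

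Next I would localize $A$ to a short window matching the exponent $\alpha(p,\gamma)$. Using \eqref{estimafunda}, the contribution of the indices $k\leq N-CN^{1-\gamma}$ to $\sum_{k\leq N}e^{k^\gamma}$ is an $O(e^{-\gamma C})$ fraction of the whole; choosing $C=C(\delta,\gamma)$ large enough, there is a sequence $N_j\to\infty$ with $|A\cap[N_j-CN_j^{1-\gamma},N_j]|\geq cN_j^{1-\gamma}$. A pigeonhole on sub-intervals of length $w_j:=\min(\sqrt{N_j},CN_j^{1-\gamma})$ then produces $M_j\in[N_j-CN_j^{1-\gamma},N_j]$, necessarily with $M_j\to\infty$, such that $|A\cap[M_j-w_j,M_j]|\geq c'w_j$.

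The uniform aggregation inequality I would use is, writing $q^*:=\max(2,q)$,
\[
\Bigl(\sum_{k}|a_k|^{q^*}r^{kq^*}\Bigr)^{1/q^*}\leq M_p(f,r),
\]
which is Parseval's identity (together with monotonicity of $L^p$ norms on the circle) when $q^*=2$, and the Hausdorff--Young inequality when $1<p<2$, $q^*=q>2$. Applying it at $r=M_j$, restricting the sum to the Gaussian sub-window of width $\sqrt{M_j/q^*}$ inside $[M_j-w_j,M_j]$, on which Stirling's formula guarantees $(M_j^k/k!)^{q^*}\gtrsim e^{q^*M_j}/M_j^{q^*/2}$, and noting that $A$ contributes $\gtrsim\min(\sqrt{M_j},M_j^{1-\gamma})=M_j^{\min(1/2,1-\gamma)}$ summands there, one obtains
\[
M_p(f,M_j)\gtrsim\frac{e^{M_j}}{M_j^{1/2-\min(1,\,2(1-\gamma))/(2q^*)}}=\frac{e^{M_j}}{M_j^{\alpha(p,\gamma)}}.
\]
Combined with \eqref{estimlimsupgamma} this forces $\psi(M_j)\gtrsim 1$, contradicting $\psi(r)\to 0$.

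The hard part will be the localization step: one must turn the positive upper $\beta^\gamma$-density of $A$ into a precise count of $A$-elements inside a window whose length is \emph{exactly} the function of $N_j$ that produces $\alpha(p,\gamma)$. The asymptotic \eqref{estimafunda} imposes the weight-driven length $N^{1-\gamma}$, whereas the Gaussian concentration width $\sqrt{M_j/q^*}$ intrinsic to $(r^k/k!)^{q^*}$ becomes the binding constraint once $\gamma<1/2$. The crossover between these two regimes at $\gamma=1/2$ is precisely the phase transition exhibited by $\alpha(p,\gamma)$ in the statement.
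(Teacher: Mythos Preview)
Your proposal is correct and follows the same overall strategy as the paper's proof: extract a set of indices $n$ with $|a_n|\gtrsim 1/n!$ of positive upper $\beta^\gamma$-density, localize it to a window of length $\sim N^{1-\gamma}$ via the weight asymptotics \eqref{estimafunda}, and then combine Hausdorff--Young (Parseval when $p\geq 2$) with Stirling to produce a lower bound on $M_p(f,r)$ contradicting \eqref{estimlimsupgamma}. The one genuine technical difference lies in how the contributions of the $\sim N^{1-\gamma}$ localized indices are aggregated. The paper \emph{integrates} the Hausdorff--Young inequality over $r\in[m_l-m_l^{1-\gamma},\,m_l+m_l^{1-\gamma}]$, exploiting that for each $n$ in that range the integral of $r\mapsto r^{qn+\cdots}e^{-qr}/n!^q$ over a sub-interval of length $\min(\sqrt{n/q},\,n^{1-\gamma})$ near its peak is bounded below by a constant (the two cases $\gamma\leq 1/2$ and $\gamma>1/2$ corresponding to which of the two lengths is smaller). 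You instead \emph{evaluate at a single point} $r=M_j$, after a preliminary pigeonhole that packs $\gtrsim \min(\sqrt{M_j},M_j^{1-\gamma})$ indices of the set into a window of that same length, on which $M_j^k/k!\gtrsim e^{M_j}/\sqrt{M_j}$ uniformly. Your route trades the integration and the linear minorant for the extra pigeonhole step (needed only when $\gamma\leq 1/2$); both arguments produce the same exponent $\alpha(p,\gamma)$, and in both the phase transition at $\gamma=1/2$ emerges from comparing the weight-driven scale $N^{1-\gamma}$ with the Stirling concentration scale $\sqrt{N}$.
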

		
		\begin{proof} First notice that a careful examination of the proof of \cite[Theorem 2.4]{BBG} allows us to claim that there is no $\mathscr{U}$-frequently hypercyclic entire function $f$ for the differentiation operator that satisfies $M_p(f,r)\leq \psi(r)\frac{e^r}{r^{a(p)}}$. Indeed it suffices to replace $\liminf$ by $\limsup$ at the end of the proof. Since $\alpha(p,0)=a(p)$, the theorem is proved  for $\gamma=0$.\\
			
			Now let us consider $0<\gamma<1$ and $p>1$. Since $M_2(f,r)\leq M_p(f,r)$ for $2\leq p\leq\infty$, it suffices to prove the result for $p\leq 2$. Thus assume $1<p\leq 2$. 
			
			\vskip1mm
			
			Let also $f=\sum_{n\geq 0}\frac{a_n}{n!}z^n$ be a $\mathscr{U}_{\beta^\gamma}$-frequently hypercyclic entire function for the differentiation operator. One can find an increasing sequence $(n_k)$ of positive integers with $\overline{d}_{\beta^\gamma}((n_k))>0$ such that $\vert D^{n_k}f(0)-3/2\vert=\vert a_{n_k}-3/2\vert<1/2$, which implies $\vert a_{n_k}\vert>1$. Set $I=\{n\in\mathbb{N};\ \vert a_n\vert>1\}$ and for all positive integer $m$, $I_m=I\cap [1,m]$. Clearly $\delta_\gamma:=\overline{d}_{\beta^\gamma}(I)>0$. 
			Let us consider an increasing sequence $(m_l)$ of positive integers such that 
			\begin{equation}\label{equaml}
				\frac{\gamma}{m_l^{1-\gamma}e^{m_l^\gamma}}\sum_{n\in I_{m_l}}e^{n^\gamma}\rightarrow \delta_{\gamma}.
			\end{equation}
			For every $0<\gamma<1$, let us also choose $\eta_1,$ $\eta_2$ and $\varepsilon$ that depend on $\gamma$ such that 
			\begin{equation}\label{choiceetaepsilon}
				0<\eta_1<1-e^{-\gamma},\quad 0<\eta_2<\frac{\delta_{\gamma}(1-e^{-\gamma}-\eta_1)}{e^{-\gamma}+\eta_1}\hbox{ and } 0<\varepsilon<\delta_{\gamma}(1-e^{-\gamma}-\eta_1)-\eta_2(e^{-\gamma}+\eta_1).
			\end{equation}
			\vskip2mm
			\begin{enumerate}[(a)]
				\item \textit{Case $0<\gamma\leq\frac{1}{2}$.}\\
				In this case, we have $\alpha(p,\gamma)=1/2-1/(2q)=1/(2p)$. Suppose that $f$ satisfies (\ref{estimlimsupgamma}). By the Hausdorff-Young inequality, we obtain
				$$\left( \sum_{n=0}^{+\infty }\vert a_n\vert ^q \frac{r^{qn}}{n!^q}\right)^{1/q}\leq M_p(f,r)\leq \psi(r)\frac{e^r}{r^{\frac{1}{2}-\frac{1}{2q}}},$$
				for $r>0$ sufficiently large. Thus we derive
				\begin{equation}\label{sumestim0}	
					\sum_{n=0}^{+\infty}\vert a_n\vert ^q \frac{r^{qn+q/2-1/2}e^{-qr}}{n!^q[\psi(r)]^q}\leq 1.		
				\end{equation}
				Thanks to Stirling formula, the function 
				$$r\mapsto g(r)=\frac{r^{qn+q/2-1/2}e^{-qr}}{n!^q}$$
				has its maximum at $a_n:=n+1/(2p)$ with $g(a_n)\sim \frac{(2\pi)^{-q/2}}{\sqrt{n}}$ and an inflection point at $b_n:=a_n+\left(n/q+1/(2pq)\right)^{1/2}$. On $[a_n,b_n]$, the function $g$ therefore dominates the linear function $h$ that satisfies $h(a_n)=g(a_n)$ and $h(b_n)=0$. We get $h:r\mapsto \frac{g(a_n)}{b_n-a_n} (-r+b_n)$.\\
				Now let $(m_l)$ be the sequence given by (\ref{equaml}). If $l$ is sufficiently large, for all  $m_l-\lfloor m_l^{1-\gamma}\rfloor\leq n\leq m_l$, we have $[a_n,b_n]\subset [m_l-\lfloor m_l^{1-\gamma}\rfloor,m_l+\lfloor m_l^{1-\gamma}\rfloor]$ since $1-\gamma\geq 1/2$. In the following, set $u_l:=m_l-\lfloor m_l^{1-\gamma}\rfloor$, $v_l:=m_l+\lfloor m_l^{1-\gamma}\rfloor$ and observe that $u_l\rightarrow\infty$ as $l$ tends to infinity. Hence, for these positive integers $n$, the following chain of inequalities holds:
				$$\int_{u_l}^{v_l}\frac{r^{qn+q/2-1/2}e^{-qr}}{n!^q[\psi(r)]^q} dr\geq \int_{a_n}^{b_n}\frac{h(r)}{[\psi(u_l)]^q}dr=\frac{(b_n-a_n)g(a_n)}{2[\psi(u_l)]^q}\geq  \frac{C_1}{[\psi(u_l)]^q},$$
				where $C_1>0$ is a constant. Now integrating (\ref{sumestim0}) over $[u_l,v_l]$, we derive, for $l$ large enough,
				\begin{equation}\label{estim3sums0}\frac{1}{e^{m_l^\gamma}}\sum_{n=u_l;\atop n\in I_{m_l}}^{m_l} e^{n^\gamma}\leq\frac{1}{e^{m_l^\gamma}}\sum_{n=u_l}^{m_l}\vert a_n\vert^q e^{n^\gamma}\leq\sum_{n=u_l}^{m_l}\vert a_n\vert^q\frac{e^{n^\gamma}}{e^{n^\gamma}}\leq \frac{[\psi(u_l)]^q}{C_1} \lfloor m_l^{1-\gamma}\rfloor.
				\end{equation}
				Let us write
				\begin{equation}\label{equsum0}
					\sum_{n=u_l;\atop n\in I_{m_l}}^{m_l} e^{n^\gamma}=\sum_{n\in I_{m_l}} e^{n^\gamma}-\sum_{ n\in I_{u_l-1}} e^{n^\gamma}.
				\end{equation}
				By hypothesis, for all $l$ large enough, we have
				$$\sum_{n\in I_{m_l}} e^{n^\gamma}\geq \left(\delta_{\gamma}-\frac{\varepsilon}{2}\right)\frac{m_l^{1-\gamma}e^{m_l^\gamma}}{\gamma}\hbox{ and }
				\sum_{ n\in I_{u_l-1}} e^{n^\gamma}\leq (\delta_{\gamma}+\eta_2)\frac{(u_l-1)^{1-\gamma}e^{(u_l-1)^\gamma}}{\gamma}.$$
				Thanks to the fact that 
				$$\left(\frac{u_l-1}{m_l}\right)^{1-\gamma}e^{(u_l-1)^\gamma-m_l^\gamma}\rightarrow e^{-\gamma},$$
				we have, for all $l$ sufficiently large,
				$$ \sum_{ n\in I_{u_l-1}} e^{n^\gamma}\leq \frac{m_l^{1-\gamma}e^{m_l^\gamma}}{\gamma}(\delta_{\gamma}+\eta_2)(e^{-\gamma}+\eta_1),$$
				and we derive, using (\ref{choiceetaepsilon}), for all $l$ sufficiently large,
				\begin{equation}\label{estim4sums0}\frac{\varepsilon}{2} \frac{m_l^{1-\gamma}e^{m_l^\gamma}}{\gamma}\leq \frac{m_l^{1-\gamma}e^{m_l^\gamma}}{\gamma}\left(\left(\delta_{\gamma}-\frac{\varepsilon}{2}\right)-(\delta_{\gamma}+\eta_2)(e^{-\gamma}+\eta_1)\right)
					\leq\left(\sum_{n\in I_{m_l}} e^{n^\gamma}-\sum_{ n\in I_{u_l-1}} e^{n^\gamma}\right).
				\end{equation}
				Thus combining (\ref{estim3sums0}), (\ref{equsum0}) with (\ref{estim4sums0}) we obtain, for all $l$ sufficiently large,
				$$ \frac{\varepsilon}{2} \frac{m_l^{1-\gamma}}{\gamma}\leq \frac{[\psi(u_l)]^q}{C_1} \lfloor m_l^{1-\gamma}\rfloor,$$
				which gives a contradiction since $\psi(u_l)\rightarrow 0$ as $l$ tends to infinity. Thus $f$ cannot satisfy (\ref{estimlimsupgamma}).
				
				\vskip4mm

				\item \textit{Case $\frac{1}{2}<\gamma<1$.}\\ 
				In this case, we have $\alpha(p,\gamma)=1/2-(1-\gamma)/q$. Suppose that $f$ satisfies (\ref{estimlimsupgamma}). By the Hausdorff-Young inequality, we obtain
				$$\left( \sum_{n=0}^{+\infty }\vert a_n\vert ^q \frac{r^{qn}}{n!^q}\right)^{1/q}\leq M_p(f,r)\leq \psi(r)\frac{e^r}{r^{\frac{1}{2}-\frac{1-\gamma}{q}}},$$
				for $r>0$ sufficiently large. Thus we derive
				\begin{equation}\label{sumestim2}	
					\sum_{n=0}^{+\infty}\vert a_n\vert ^q \frac{r^{qn+q/2-(1-\gamma)}e^{-qr}}{n!^q[\psi(r)]^q}\leq 1.		
				\end{equation}
				Thanks to Stirling formula, the function 
				$$r\mapsto g(r)=\frac{r^{qn+q/2-(1-\gamma)}e^{-qr}}{n!^q}$$
				has its maximum at $a_n':=n+1/2-(1-\gamma)/q$ with $g(a_n')\sim \frac{(2\pi)^{-q/2}}{n^{1-\gamma}}$ and an inflection point at $b_n':=a_n'+\left(n/q+1/(2q)-(1-\gamma)/q^2\right)^{1/2}$. We set $b_n''=n+n^{1-\gamma}-1$. Since $1/2<\gamma<1$, we have, for all $n$ sufficiently large, $a_n'\leq b_n''\leq b_n'$. Hence on $[a_n',b_n'']$, the function $g$ therefore dominates the linear function $h$ that satisfies $h(a_n')=g(a_n')$ and $h(b_n'')=0$. \\
				Now let $(m_l)$ be the sequence given by (\ref{equaml}). If $l$ is sufficiently large and $u_l=m_l-\lfloor m_l^{1-\gamma}\rfloor\leq n\leq m_l$, then $[a_n',b_n'']\subset [u_l,v_l]$, with $v_l=m_l+\lfloor m_l^{1-\gamma}\rfloor$. Hence, for such positive integers $n$ large enough we have, using the equality $b_n''-a_n'=n^{1-\gamma}-1/2+(1-\gamma)/q,$ 
				$$\int_{u_l}^{v_l}\frac{r^{qn+q/2-(1-\gamma)}e^{-qr}}{n!^q[\psi(r)]^q} dr\geq \int_{a_n'}^{b_n''}\frac{h(r)}{[\psi(u_l)]^q}dr=\frac{(b''_n-a_n')g(a_n')}{2[\psi(u_l)]^q}\geq \frac{C_2}{[\psi(u_l)]^q},$$
				where $C_2>0$ is a constant. Now integrating (\ref{sumestim2}) over $[u_l,v_l]$, we derive, for $l$ large enough,
				\begin{equation}\label{estim3sums1}\frac{1}{e^{m_l^\gamma}}\sum_{n=u_l;\atop n\in I_{m_l}}^{m_l} e^{n^\gamma}\leq\frac{1}{e^{m_l^\gamma}}\sum_{n=u_l}^{m_l}\vert a_n\vert^q e^{n^\gamma}\leq\sum_{n=u_l}^{m_l}\vert a_n\vert^q\frac{e^{n^\gamma}}{e^{n^\gamma}}\leq \frac{[\psi(u_l)]^q}{C_2} \lfloor m_l^{1-\gamma}\rfloor.
				\end{equation}
				The formula (\ref{estim3sums1}) is similar to (\ref{estim3sums0}). Therefore we can end the proof as in the previous case to obtain a contradiction. 
		
			\end{enumerate}

		\end{proof}

		\section{Optimality of the lower estimates and proof of the main theorem}\label{section3}
		This section is devoted to prove the main theorem (Theorem \ref{maintheorem}). First we are going to show the optimality of the growth rate given in Theorem \ref{sens1} by constructing for $\mathscr{U}_{\beta^\gamma}$-frequently hypercyclic functions for $1/2<\gamma<1$. This construction is based on several modifications of the one developed by Drasin and Saksman in \cite{Drasin} where the authors have explicitly exhibited $D$-frequently hypercyclic functions with the optimal growth rate. As in \cite{Drasin} we will need the so-called Rudin-Shapiro polynomials which have coefficients 
		$\pm 1$ (or bounded by $1$) and an optimal growth of $L^p$-norm. 
		
		\subsection{Preliminaries and notations}\label{notations_drasin}
		Let us recall the associated result in the form of\cite[Lemma 2.1]{Drasin} that summarized the result of Rudin-Shapiro \cite{RudSha}. For any polynomial $h$, let us define, for all $p\geq 1$,
		$$ \Vert h \Vert_p= \left(\int_0^{2\pi}\vert h(e^{it})\vert^p\right)^{1/p}dt\quad\hbox{ and }\quad \Vert h\Vert_{\infty}=\sup_{0\leq t\leq 2\pi}\vert h(e^{it})\vert.$$ 
		
		\begin{lemma}\label{lemma_rud_shap} 
			\begin{enumerate}
				\item For each $N\geq 1$, there is a trigonometric polynomial $p_N=\sum_{k=0}^{N-1}\varepsilon_{N,k}e^{ik\theta}$ 
				where $\varepsilon_{N,k}=\pm 1$ for all $0\leq k\leq N-1$ with at least half of the coefficients being $+1$ and with 
				$$\Vert p_{N}\Vert_{p}\leq 5\sqrt{N}\hbox{ for }p\in [2,+\infty].$$
				\item For each $N\geq 1$, there is a trigonometric polynomial $p_N^*=\sum_{k=0}^{N-1}a_{N,k}e^{ik\theta}$ 
				where $\vert a_{N,k}\vert\leq 1$ for all $0\leq k\leq N-1$ with at least $\lfloor \frac{N}{4} \rfloor$ coefficients being $+1$ and with 
				$$\|p^{*}_{N}\|_{p}\leq 3N^{1/q}\hbox{ for }p\in [1,2].$$
			\end{enumerate} 
		\end{lemma}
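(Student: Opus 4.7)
The strategy is to combine the classical Rudin-Shapiro construction with log-convexity (interpolation) for Part~(1), and to use a de la Vallée Poussin--type kernel together with the same interpolation idea for Part~(2).

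For Part~(1), I would invoke the recursive Rudin-Shapiro pair $(P_n,Q_n)$ defined by $P_0=Q_0=1$ and $P_{n+1}(z)=P_n(z)+z^{2^n}Q_n(z)$, $Q_{n+1}(z)=P_n(z)-z^{2^n}Q_n(z)$. An induction yields $\abs{P_n(e^{i\theta})}^2+\abs{Q_n(e^{i\theta})}^2=2^{n+1}$ uniformly in $\theta$, hence $\norme{P_n}_\infty\le\sqrt{2^{n+1}}$. A padding/concatenation argument then produces, for arbitrary $N\ge 1$, a polynomial $p_N$ of degree $N-1$ with coefficients $\pm 1$ satisfying $\norme{p_N}_\infty\le c\sqrt{N}$ for some absolute $c$. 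Parseval gives $\norme{p_N}_2=\sqrt{2\pi N}$, so log-convexity of $L^p$-norms yields, for $p\in[2,\infty]$,
$$\norme{p_N}_p\le\norme{p_N}_2^{2/p}\,\norme{p_N}_\infty^{1-2/p}\le 5\sqrt{N}.$$
Replacing $p_N$ by $-p_N$ if necessary ensures that at least half of the coefficients equal $+1$.

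For Part~(2), the essential difficulty is the bound at $p=1$, which demands $\norme{p_N^*}_1\le 3$ uniformly in $N$; this cannot follow from Rudin-Shapiro alone, since $\pm 1$ polynomials generically have $L^1$-norm of order $\sqrt{N}$. Instead I would use a one-sided de la Vallée Poussin--type polynomial, expressible as a difference of shifted Fejér kernels, whose Fourier coefficients are identically $1$ on a plateau covering at least $\lfloor N/4\rfloor$ frequencies in $\{0,\dots,N-1\}$, transition linearly to $0$ outside, and satisfy $\norme{p_N^*}_1\le 3$ (since $\norme{F_N}_1=1$ for the Fejér kernel). Because $\abs{a_{N,k}}\le 1$, Parseval gives $\norme{p_N^*}_2\le\sqrt{2\pi N}\le 3\sqrt{N}$. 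The log-convexity estimate
$$\norme{p_N^*}_p\le\norme{p_N^*}_1^{2/p-1}\,\norme{p_N^*}_2^{2-2/p}$$
then collapses to exactly $\norme{p_N^*}_p\le 3\,N^{1/q}$ for $p\in[1,2]$, which is the claim.

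The main obstacle is the implementation of the de la Vallée Poussin construction in a one-sided, scale-adapted form, so that at least $\lfloor N/4\rfloor$ Fourier coefficients on $\{0,\dots,N-1\}$ equal precisely $+1$ while the $L^1$-bound remains an absolute constant. A careful translation and truncation of the classical two-sided kernel, at the cost of replacing the optimal constant $1$ by an absolute constant no worse than $3$ in the $L^1$-estimate, should suffice. For Part~(1), the only mild technical point is the passage from dyadic lengths $2^n$ to arbitrary $N$ without destroying the $\sqrt{N}$ growth of the $L^\infty$-norm, which is standard.
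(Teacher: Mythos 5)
Your proof is correct and follows essentially the same route as the source: the paper does not prove this lemma itself but quotes it from Drasin--Saksman, whose argument is exactly the dyadic Rudin--Shapiro construction plus binary concatenation for part (1), and a modulated de la Vall\'ee Poussin kernel $2K_{2n+1}-K_n$ (whose coefficients equal $1$ on a plateau of length $2n+1$ and whose $L^1$-norm is at most $3$) interpolated between $L^1$ and $L^2$ for part (2). The only points to tidy up are (i) a consistent normalization of $\Vert\cdot\Vert_p$ --- you use $\Vert p_N\Vert_2=\sqrt{2\pi N}$ but $\Vert F_N\Vert_1=1$, mixing the unnormalized and normalized conventions; with the normalized measure (the one intended in \cite{Drasin}) all the stated constants close --- and (ii) the explicit verification that the concatenation constant $2(1+\sqrt{2})\approx 4.83$ from the binary decomposition of $N$ is indeed below $5$.
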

		
		We repeat some notations of \cite{Drasin}. 
		For any given polynomial $r$ with $r(z)=\sum_{j=0}^d \frac{r_j}{j!} z^j$ with $r_d\ne 0$ and  
		$d=\hbox{deg}(r)$, we set $\tilde{r}(z)=\sum_{j=0}^d r_jz^j$ and $\displaystyle \Vert \tilde{r}\Vert_{\ell^1}=\sum_{j=0}^d\vert r_j\vert$. 
		Denote by $\mathcal{P}=(q_k)$ the countable set of polynomials 
		with rational coefficients. Clearly $\mathcal{P}$ is a dense set in $H(\mathbb{C})$. Let us also consider 
		pairs $(q_k,l_k)$ with $d_k:=\hbox{deg}(q_k)$, $l_1=\displaystyle \Vert \tilde{q_1}\Vert_{\ell^1}$ and for $k\geq 2$, $l_k=\max(\displaystyle \Vert \tilde{q}_k\Vert_{\ell^1},1+l_{k-1})$. Thus $(l_k)$ is an increasing sequence with $l_k\rightarrow +\infty$ as $k\rightarrow +\infty$ and 
		$$\forall k\geq 1, \quad\Vert \tilde{q}_k\Vert_{\ell^1}\leq l_k.$$ 
		
		We set $\displaystyle 2\mathbb{N}=\bigcup_{k\geq 1}\mathcal{A}_{k}$ where for any $k\geq 1,$ 
		$\displaystyle \mathcal{A}_{k}=\left\{2^{k}(2j-1);j\in\mathbb{N} \right\}.$

		\subsection{Construction of a specific function}
		Given $0<c<1$ and $\frac{1}{2}<\gamma<1$. We set $\displaystyle f=\sum\limits_{n\geq 0}P_{n}$ where the sequence of blocks $(P_{n})$ are polynomials. The key point is the minimal growth and the precise size of the blocks $P_n$ to obtain both the desired growth rate and the $U_{\beta^\gamma}$-frequent hypercyclicity of the sum $f$. In order to have the slowest possible growth in terms of $L^p$-averages we use Rudin-Shapiro polynomials given by Lemma \ref{lemma_rud_shap}, whose degree depends on the values of $p$ and $\gamma$. Moreover a good choice of the degrees will ensure the $U_{\beta^\gamma}$-frequent hypercyclicity of the sum $f$. Hence we define the polynomials $P_n$ as follows
		\begin{enumerate}
			\item for $2\leq p\leq\infty,$ we set
			\begin{equation}\label{poly_2inf}
				\tilde{P}_{n}(z)  =  \left\{\begin{array}{l} 
					0 \hbox{ if } n \mbox{ is odd }\\
					0 \hbox{ if } n\in \mathcal{A}_{k}\hbox{ and } n<10 \alpha_{k}\\
					z^{n^2}p_{ \lfloor \frac{n^{2(1-\gamma)}}{\alpha_k}\rfloor } (z^{\alpha_k})\tilde{q}_k(z) \hbox{ otherwise }
			\end{array}\right.
		\end{equation}
		with for any $k\geq 1$, $\alpha_{k}=1+\left\lfloor \max\left(2d_k+8l_k,(10^5l_k/c)^2   \right)\right\rfloor$;
		\item for $1< p<2,$ we set
		\begin{equation}\label{poly_2inf}
			\tilde{P}_{n}(z)  =  \left\{\begin{array}{l} 
				0 \hbox{ if } n \mbox{ is odd }\\
				0 \hbox{ if } n\in \mathcal{A}_{k}\hbox{ and } n<10 \alpha_{k}\\
				z^{n^2}p_{ \lfloor \frac{n^{2(1-\gamma)}}{\alpha_k}\rfloor }^* (z^{\alpha_k})\tilde{q}_k(z) \hbox{ otherwise }
			\end{array}\right.
		\end{equation}
		with for any $k\geq 1$, $\alpha_{k}=1+\left\lfloor \max\left(2d_k+8l_k,(10^5l_k/c)^q   \right)\right\rfloor$.
	\end{enumerate}
	We denote by $q_k=\sum_{m=0}^{d_k}\frac{q_{k,m}}{m!}z^m$ and $p_{ \lfloor \frac{n^{2(1-\gamma)}}{\alpha_k}\rfloor }$ (or $p_{ \lfloor \frac{n^{2(1-\gamma)}}{\alpha_k}\rfloor }^*)=\sum_{m=0}^{\lfloor \frac{n^{2(1-\gamma)}}{\alpha_k}\rfloor}p_{k,\gamma,m}z^m$. 
	Under these hypotheses, since for all $i=0,\dots, \lfloor \frac{n^{2(1-\gamma)}}{\alpha_k}\rfloor-1$, $d_k+n^2+i\alpha_k<n^2+(i+1)\alpha_k$, observe that, for $ n\in \mathcal{A}_{k}$ with $n \geq \alpha_{k}$, we can write
	$$P_n=\sum_{i=0}^{\lfloor\frac{n^{2(1-\gamma)}}{\alpha_k}\rfloor}p_{k,\gamma,i}\left(\sum_{j=0}^{d_k}\frac{q_{k,j}}{(j+n^2+i\alpha_k)!}z^{j+n^2+i\alpha_k}\right).$$ Moreover, for $s=n^2+l\alpha_k$, with $0\leq l\leq \lfloor\frac{n^{2(1-\gamma)}}{\alpha_k}\rfloor$, we have
	\begin{equation}\label{equa_deriv_freq}
	\left(\frac{d}{dz}\right)^s(P_n)=p_{k,\gamma,l}q_k(z)+
	\sum_{i=l+1}^{\lfloor\frac{n^{2(1-\gamma)}}{\alpha_k}\rfloor}p_{k,\gamma,i}\left(\sum_{j=0}^{d_k}\frac{q_{k,j}}{(j+(i-l)\alpha_k)!}z^{j+(i-l)\alpha_k}\right).
	\end{equation}	
At last by construction the supports of $P_n$ are disjoint.
	
	\vskip2mm
	
	\subsection{Proof of the $\mathscr{U}_{\beta^\gamma}$-frequent hypercyclicity of the constructed function}
	The next proposition gives an estimate on the $L^p$-averages of $f$. 
	
	\begin{proposition}\label{sens21} Let $\frac{1}{2}<\gamma<1$. For $1<p\leq\infty$, the following estimate holds:
		$$M_p(f,r)\leq c\frac{e^r}{r^{\frac{1}{2}-\frac{1-\gamma}{\max(2,q)}}}.$$	
	\end{proposition}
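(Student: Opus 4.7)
The plan is to estimate $M_p(f,r)$ via the triangle inequality $M_p(f,r)\leq \sum_n M_p(P_n,r)$ and to bound each block individually by exploiting its explicit factorization. For a non-trivial block $P_n$ (i.e.\ $n\in 2\mathbb{N}\cap\mathcal{A}_k$ with $n\geq 10\alpha_k$), set $N=\lfloor n^{2(1-\gamma)}/\alpha_k\rfloor$. The support of $\tilde P_n$ lies in $[n^2,n^2+L]$ with $L\asymp n^{2(1-\gamma)}\ll n^2$, the last inequality being the place where the hypothesis $\gamma>1/2$ is first used. Since $(n^2)!/(n^2+i)!=n^{-2i}\exp(O(L^2/n^2))=n^{-2i}(1+o(1))$ uniformly for $0\leq i\leq L$ (again thanks to $\gamma>1/2$, which gives $L^2/n^2=O(n^{2-4\gamma})=o(1)$), one can rewrite
\[
P_n(z)=\frac{z^{n^2}}{(n^2)!}\,(1+o(1))\,p_N\!\bigl((z/n^2)^{\alpha_k}\bigr)\,\tilde q_k(z/n^2),
\]
with $p_N^*$ replacing $p_N$ when $1<p<2$.

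Setting $s=r/n^2$, I next bound the $L^p$-norm of this product on $|z|=r$. When $s\leq 1$, the maximum modulus principle together with Lemma \ref{lemma_rud_shap} gives $\|p_N(s^{\alpha_k}\cdot)\|_{L^p(|z|=1)}\leq 5\sqrt N$ (respectively $3N^{1/q}$ using $p_N^*$), while $|\tilde q_k(sz)|\leq l_k$ for $|z|\leq 1$. When $s$ slightly exceeds $1$, which only happens in the short window $[n^2,n^2+L]$ where $s\leq 1+n^{-2\gamma}$, a Bernstein-type perturbation yields the same estimate up to a $1+o(1)$ factor: writing $\rho=s^{\alpha_k}$, one has $(\rho-1)N=O(\alpha_k n^{-2\gamma}\cdot n^{2(1-\gamma)}/\alpha_k)=O(n^{2-4\gamma})=o(1)$, which controls $\|p_N(\rho\cdot)-p_N\|_{L^p(|z|=1)}$ via the Bernstein inequality $\|P'\|_{L^p}\leq N\|P\|_{L^p}$. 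Combining with Stirling's estimate $r^{n^2}/(n^2)!\leq C\,e^{r}\,n^{-1}\exp(-(r-n^2)^2/(2n^2))$, valid uniformly on the relevant range, I obtain
\[
M_p(P_n,r)\leq C\, l_k\, N^{1/\max(2,q)}\cdot \frac{e^r}{\sqrt r}\cdot \exp\!\left(-\frac{(r-n^2)^2}{2n^2}\right).
\]

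Substituting $N^{1/\max(2,q)}=n^{2(1-\gamma)/\max(2,q)}\alpha_k^{-1/\max(2,q)}$ and $\sqrt r\sim n$, the algebraic prefactor rearranges exactly into $e^r\,r^{-(1/2-(1-\gamma)/\max(2,q))}$ times $l_k\,\alpha_k^{-1/\max(2,q)}$. The construction chose $\alpha_k$ precisely so that $l_k\,\alpha_k^{-1/\max(2,q)}\leq c/10^5$, producing a small absolute constant in each term. The Gaussian weight $\exp(-(r-n^2)^2/(2n^2))$ is uniformly summable over even $n$, because consecutive even squares differ by $\approx 4\sqrt r$; so $\sum_n \exp(-(r-n^2)^2/(2n^2))=O(1)$, and collecting constants yields $M_p(f,r)\leq c\,e^r/r^{\alpha(p,\gamma)}$ for $r$ large (small $r$ is trivial since only finitely many $P_n$ contribute). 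The principal technical obstacle is the off-center regime $r>n^2$, where the Rudin--Shapiro polynomial is evaluated on a circle of radius slightly above $1$; the perturbation estimate that saves the argument relies critically on $\gamma>1/2$, which is exactly what distinguishes this range of $\gamma$ from the harder case $\gamma\leq 1/2$ already treated in Theorem \ref{DFHC}.
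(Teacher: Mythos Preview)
Your approach differs substantially from the paper's. The paper does not carry out any direct estimate: it simply notes that, because $2(1-\gamma)<1$, the blocks $P_n$ satisfy the hypotheses of Propositions~3.3 and~3.4 of Drasin--Saksman \cite{Drasin}. Proposition~3.4 there gives $M_p(P_n,n^2)\le 10\,e^{n^2}n^{-1}\bigl\|p_N(z^{\alpha_k})\tilde q_k(z)\bigr\|_p$ (and the analogous bound at $r=(n+1)^2$); feeding in Lemma~\ref{lemma_rud_shap} and the definition of $\alpha_k$ yields $M_p(P_n,n^2)\le 10^{-3}c\,e^{n^2}n^{-2a}$ with $a=\alpha(p,\gamma)$. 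Proposition~3.3 then upgrades these two-point estimates to the global bound $M_p(f,r)\le c\,e^r r^{-a}$. All the analytic work---both the passage from $\|\tilde P_n/z^{n^2}\|_p$ to $M_p(P_n,n^2)$ and the summation over blocks---is outsourced to \cite{Drasin}.

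Your from-scratch route is natural but has a genuine gap. The displayed factorization with a global ``$(1+o(1))$'' factor is not valid as a statement about $L^p$-norms. What is true is that each \emph{coefficient} of $P_n$ equals the corresponding coefficient of $\frac{z^{n^2}}{(n^2)!}p_N((z/n^2)^{\alpha_k})\tilde q_k(z/n^2)$ times $1+O(n^{2-4\gamma})$. But perturbing each Fourier coefficient of $p_N$ by a factor $1+O(\varepsilon)$ can change $\|p_N\|_\infty$ by $O(\varepsilon N)$, not by $O(\varepsilon\,\|p_N\|_\infty)$: the Rudin--Shapiro gain $\|p_N\|_\infty\asymp\sqrt N$ is a cancellation phenomenon, and arbitrary coefficient-wise perturbations destroy it. Quantitatively, the error term is at most $O(n^{2-4\gamma})\cdot N l_k\,e^{n^2}/n$ while the main term is of order $\sqrt N\,l_k\,e^{n^2}/n$, so their ratio is $O(n^{3-5\gamma})/\sqrt{\alpha_k}$, which diverges for $\tfrac12<\gamma<\tfrac35$ (with $k$ fixed and $n\to\infty$ in $\mathcal A_k$). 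Proposition~3.4 of \cite{Drasin} avoids this by an argument that uses the \emph{monotonicity} of the weights $(n^2)!\,n^{2m}/(n^2+m)!$ rather than their closeness to~$1$; any direct proof must do something similar. Two smaller issues: the Gaussian upper bound $r^{n^2}/(n^2)!\le C e^{r}n^{-1}e^{-(r-n^2)^2/(2n^2)}$ fails for $r$ well above $n^2$ (the true decay is slower), so the far tail of your sum over $n$ is not controlled as written; and the ``short window'' for the Bernstein perturbation should be $|r-n^2|=O(n)$ (the range where the Gaussian is non-negligible), not $O(n^{2(1-\gamma)})$---fortunately $(\rho-1)N=O(n^{1-2\gamma})$ still tends to $0$ there.
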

	
	\begin{proof}
		Let first $2\leq p\leq\infty$ (so $\max(2,q)=2$). It is enough to consider only the case $p=\infty$. First since $0<2(1-\gamma)<1$, observe that Proposition 3.4 of \cite{Drasin} remains true in our context with the blocks $P_n$ defined above. Thus using Proposition 3.4 of \cite{Drasin}, Lemma \ref{lemma_rud_shap} and the hypotheses, we get, for $n\in\mathcal{A}_k$ positive and even,
		$$\begin{array}{rcl}M_{\infty}(P_n,n^2)\leq 10 e^{n^2}n^{-1}\Vert p_{ \lfloor \frac{n^{2(1-\gamma)}}{\alpha_k}\rfloor } (z^{\alpha_k})\tilde{q}_k(z)\Vert_{\infty}&
			\leq &  10e^{n^2}n^{-1}5\sqrt{ \frac{n^{2(1-\gamma)}}{\alpha_k}}\Vert \tilde{q}_k\Vert_{\ell^1}\\&\leq& 10 e^{n^2}n^{-\gamma}5 l_k\alpha_k^{-1/2}\\&\leq&10^{-4} e^{n^2}n^{-\gamma} 5 c\\&\leq&c 10^{-3} e^{n^2}n^{-\gamma},\end{array}$$
		and, in the similar manner,
		$$M_{\infty}(P_n,(n+1)^2)\leq c 10^{-3}e^{(n+1)^2}(n+1)^{-\gamma}.$$
		Next notice that $P_n=0$ for odd $n$. Thus applying Proposition 3.3 of \cite{Drasin} with $a=\gamma/2\in (0,1/2)$ we get
		$$M_{\infty}(f,r)\leq c e^r r^{-\gamma/2}=c e^r r^{-(1/2-(1-\gamma)/2)}.$$
		Finally let $1<p<2$ (so $\max(2,q)=q$). Similar estimates as the previous case lead to the following inequalities: for $n\in\mathcal{A}_k$ positive and even (for odd $n$ $P_n=0$), 
		$$\begin{array}{rcl}M_{p}(P_n,n^2)\leq 10 e^{n^2}n^{-1}\Vert p_{ \lfloor \frac{n^{2(1-\gamma)}}{\alpha_k}\rfloor }^* (z^{\alpha_k})\Vert_{p}\Vert \tilde{q}_k(z)\Vert_{\ell^1}&
			\leq& 10 e^{n^2}n^{-1}3\left(\frac{n^{2(1-\gamma)}}{\alpha_k}\right)^{1/q}l_k\\&\leq& 10^{-4} 3c e^{n^2}n^{-(1-\frac{2(1-\gamma)}{q})} \\&\leq& 10^{-3} c e^{n^2}n^{-(1-\frac{2(1-\gamma)}{q})},\end{array}$$
		and, in the same manner,
		$$M_{\infty}(P_n,(n+1)^2)\leq 10^{-3} c e^{(n+1)^2}(n+1)^{-(1-\frac{2(1-\gamma)}{q})}.$$
		Thus applying Proposition 3.3 of \cite{Drasin} with $a=\frac{1}{2}-\frac{(1-\gamma)}{q}\in (0,1/4)$ we get
		$$M_{\infty}(f,r)\leq ce^r r^{-(\frac{1}{2}-\frac{(1-\gamma)}{q})}.$$	
	\end{proof}
	
	Now we prove the $\mathscr{U}_{\beta^\gamma}$-frequent hypercyclicity of $f$. 
	
	\begin{proposition}\label{sens22} Let $\frac{1}{2}<\gamma<1$. The function $f$ is $\mathscr{U}_{\beta^\gamma}$-frequently hypercyclic.
	\end{proposition}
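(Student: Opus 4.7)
The plan is to exploit the density of the countable family $\mathcal{P}=(q_k)$ in $H(\mathbb{C})$: it suffices to show that for each $k \geq 1$, each $\rho > 0$ and each $\varepsilon > 0$, the set
\[
G_{k,\rho,\varepsilon} := \bigl\{s \in \mathbb{N} : \sup_{|z| \leq \rho} |D^s f(z) - q_k(z)| < \varepsilon\bigr\}
\]
satisfies $\overline{d}_{\beta^\gamma}(G_{k,\rho,\varepsilon}) > 0$. Every non-empty open subset of $H(\mathbb{C})$ contains such a basic neighborhood of some $q_k$, which would force $\{s \in \mathbb{N}: D^s f \in U\}$ to contain $G_{k,\rho,\varepsilon}$ and therefore to have positive upper $\beta^\gamma$-density.

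The heart of the argument is an approximation estimate. Fix $k$ and take $n \in \mathcal{A}_k$ with $n \geq 10 \alpha_k$ and $l \in \{0,\dots,\lfloor n^{2(1-\gamma)}/\alpha_k\rfloor\}$ with $p_{k,\gamma,l} = 1$. Setting $s = n^2 + l\alpha_k$, equation (\ref{equa_deriv_freq}) yields $D^s P_n = q_k + T_{n,l}$, where $T_{n,l}$ consists of ``shifted copies'' of $q_k$ with denominators $(j + r\alpha_k)!$, $r \geq 1$. Since $\alpha_k \geq 2 d_k + 8 l_k$, the tail $T_{n,l}$ is bounded on $|z| \leq \rho$ by a constant depending on $k$ and $\rho$ times $\rho^{\alpha_k}/\alpha_k!$ multiplied by the $O(n^{2(1-\gamma)})$ number of terms, hence $<\varepsilon/2$ for $n$ large. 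For the cross-block terms: since $\gamma > 1/2$ gives $2(1-\gamma) < 1$, one has $\deg(P_m) \leq m^2 + m^{2(1-\gamma)} + d_{k_m} < n^2 \leq s$ for $m \leq n-2$ and $n$ large, so $D^s P_m = 0$. For $m > n$, each nonzero monomial of $D^s P_m$ has degree at least $m^2 - s \geq 4n - n^{2(1-\gamma)} \geq 3n$, so
\[
\sum_{m > n} |D^s P_m(z)| \leq e^{\rho} \sum_{m > n} l_{k_m}\, \frac{\rho^{m^2 - s}}{(m^2 - s)!}
\]
tends to $0$ as $n \to \infty$, the factorial decay dominating the growth of $l_{k_m}$. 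Hence $s \in G_{k,\rho,\varepsilon}$ for $n$ large.

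The density step is then direct. By Lemma~\ref{lemma_rud_shap} at least $\lfloor n^{2(1-\gamma)}/\alpha_k\rfloor / 4$ of the indices $l$ satisfy $p_{k,\gamma,l} = 1$, each producing a good $s(n,l) \in [n^2, n^2 + n^{2(1-\gamma)}]$. On that interval the weights $e^{s^\gamma}$ are comparable to $e^{n^{2\gamma}}$: the expansion $(n^2 + h)^\gamma = n^{2\gamma} + \gamma h\, n^{2\gamma - 2} + O(h^2 n^{2\gamma-4})$ applied at $h = n^{2(1-\gamma)}$ gives $N_n^\gamma = n^{2\gamma} + \gamma + o(1)$ for $N_n := n^2 + \lfloor n^{2(1-\gamma)}\rfloor$. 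Using (\ref{estimafunda}), $\sum_{s=1}^{N_n} e^{s^\gamma} \sim \gamma^{-1} e^{\gamma} n^{2(1-\gamma)} e^{n^{2\gamma}}$, and
\[
\frac{\sum_{s \in G_{k,\rho,\varepsilon},\, s \leq N_n} e^{s^\gamma}}{\sum_{s=1}^{N_n} e^{s^\gamma}} \geq \frac{\gamma}{4 e^\gamma\, \alpha_k} + o(1).
\]
Passing to the $\limsup$ along $n \in \mathcal{A}_k$ gives $\overline{d}_{\beta^\gamma}(G_{k,\rho,\varepsilon}) > 0$, which completes the argument.

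The main obstacle is the cross-block estimate for $m > n$: the coefficients of $P_m$ carry a factor $l_{k_m}$ which can grow without bound, and the uniformity on compact disks has to be extracted from the factorial decay $(m^2 - s)!$ produced by the quadratic spacing $m \mapsto m^2$. The hypothesis $\gamma > 1/2$ enters critically, because it makes the block width $n^{2(1-\gamma)}$ negligible with respect to the spacing $2n+1$ between consecutive squares: this is what allows the supports in index space to be effectively disjoint, automatically kills the $m < n$ contribution, and keeps the weighted length of a block of the same order as the total weight up to its right endpoint.
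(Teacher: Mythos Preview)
Your overall strategy matches the paper's: approximate $q_k$ by $D^s f$ for suitable $s$ built from the block $P_n$, then verify positive upper $\beta^\gamma$-density of those $s$. The density computation is correct and essentially the same as the paper's (you use comparability of the weights $e^{s^\gamma}$ on the window $[n^2, n^2+n^{2(1-\gamma)}]$, while the paper computes two partial-sum ratios via (\ref{estimafunda}); both give a strictly positive limit depending on $\alpha_k$).

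There is, however, a real gap in the approximation step. Your bound on the in-block tail $T_{n,l}$ is incorrect, and the conclusion you draw from it is false. The $r$-th group of terms in $T_{n,l}$ (with $r=i-l\geq 1$) carries the denominator $(j+r\alpha_k)!$, not merely $\alpha_k!$, so on $|z|\leq\rho$ one gets
\[
|T_{n,l}(z)|\ \leq\ l_k\,\rho^{d_k}\sum_{r\geq 1}\frac{\rho^{r\alpha_k}}{(r\alpha_k)!}\ =:\ C(k,\rho),
\]
a finite constant \emph{independent of $n$}. Your stated bound, (constant)$\times \rho^{\alpha_k}/\alpha_k!\times O(n^{2(1-\gamma)})$, grows with $n$, so the assertion ``$<\varepsilon/2$ for $n$ large'' cannot follow from it; and even the correct bound $C(k,\rho)$ does not tend to $0$ as $n\to\infty$. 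Hence for fixed $k$ the indices $s\in\mathcal{B}_n$ coming from $\mathcal{A}_k$ do not, in general, force $D^sf$ within an \emph{arbitrary} $\varepsilon$ of $q_k$ on an \emph{arbitrary} disk, and the strong sufficient condition you set out to prove ($\overline d_{\beta^\gamma}(G_{k,\rho,\varepsilon})>0$ for every triple $k,\rho,\varepsilon$) is not established by this argument.

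The fix is to reverse the quantifiers, which is exactly how the paper frames it: given a nonempty open $U\subset H(\mathbb{C})$, use the density of $(q_k)$ together with $l_k\to\infty$ to pick $k$ so large that $\{g:\sup_{|z|\leq l_k}|g-q_k|\leq 1/l_k\}\subset U$. For that $k$, the choice of $\alpha_k$ (in particular $\alpha_k\geq 8l_k$) makes $C(k,l_k)$ and the cross-block contributions together $\leq 1/l_k$; this is precisely the Drasin--Saksman estimate the paper invokes. With this correction your cross-block analysis and your density computation go through unchanged.
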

	
	\begin{proof} To prove the hypercyclicity of $f$, we argue as in \cite{Drasin} (p. 3684-3685). We give the ideas of the proof for $2\leq p\leq \infty$ (for $1<p<2$ the proof is similar). We set $f=\sum_{j\geq 0}\frac{a_j}{j!}z^j$. For any fixed even integer $n\in\mathcal{A}_k$ with $n\geq \alpha_k$, denote by $\mathcal{B}_n$ the set of indices $n$ such that the coefficient of $z^s$ in the polynomial $z^{n^2}p_{\lfloor \frac{n^{2(1-\gamma)}}{\alpha_k}\rfloor}(z^{\alpha_k})$ is $1$. Using the equality (\ref{equa_deriv_freq}) and the definition of $\alpha_k$ ($\alpha_k\geq 8l_k$), we get for all $k\geq 1$, with $s=n^2+l\alpha_k$,
		$$\begin{array}{rcl}\displaystyle\left(\frac{d}{dz}\right)^sf(z)-q_k(z)&=&\displaystyle
		\sum_{i=l+1}^{\lfloor\frac{n^{2(1-\gamma)}}{\alpha_k}\rfloor}p_{k,\gamma,i}\left(\sum_{j=0}^{d_k}\frac{q_{k,j}}{(j+(i-l)\alpha_k)!}z^{j+(i-l)\alpha_k}\right)+\sum_{m\geq n+2}P_m\\&=&\displaystyle
		\sum_{i=s+8l_k}^{(n+1)^2-1}\frac{a_i}{(i-s)!}z^{i-s}+\sum_{i=(n+2)^2}^{+\infty}\frac{a_i}{(i-s)!}z^{i-s}
		 .\end{array}$$
	We end as in \cite{Drasin} to obtain	
		$$\sup_{\vert z\vert=l_k}\left\vert q_k(z)-\left(\frac{d}{dz}\right)^sf(z)\right\vert\leq\frac{1}{l_k}\hbox{ for any }s\in\mathcal{B}_n,\ n\in\mathcal{A}_k.$$
		Thus since $(q_k)$ is a dense family in the space of entire functions, the function $f$ is hypercyclic for the differentiation operator.\\
		Now to conclude it remains to prove that for any fixed $k\geq 1$ the set 
		$$\mathcal{T}_k=\{s:s\in\mathcal{B}_n,\ n\in\mathcal{A}_k,\ n\geq \alpha_k\}$$ 
		has positive upper $\beta^{\gamma}$-density. To do this, first observe that for any fixed even integer $n\in\mathcal{A}_k$ $\max(\mathcal{B}_n)\leq n^2+\alpha_k \lfloor \frac{n^{2(1-\gamma)}}{\alpha_k}\rfloor\leq n^2+ \lfloor n^{2(1-\gamma)}\rfloor$ and, thanks to Lemma \ref{lemma_rud_shap} at least half of the coefficients of $p_{\lfloor \frac{n^{2(1-\gamma)}}{\alpha_k}\rfloor}$ is equal to $1$. 

		Hence we get 
		\begin{equation}\label{dgammaupp1}\frac{1}{\sum\limits_{j\leq \max(\mathcal{B}_n)}e^{j^\gamma}}\sum\limits_{j\in \mathcal{B}_n;\atop j\leq \max(\mathcal{B}_n) }e^{j^\gamma}\geq \frac{1}{\sum\limits_{j\leq n^2+\lfloor n^{2(1-\gamma)}\rfloor }e^{j^\gamma}}\sum\limits_{j=n^2}^{n^2+\lfloor 2^{-1}\lfloor\frac{n^{2(1-\gamma)}}{\alpha_k}-1\rfloor\rfloor}e^{j^\gamma}.
		\end{equation}
		We write 
		\begin{equation}\label{dgammaupp2}\sum\limits_{j=n^2}^{n^2+\lfloor 2^{-1}\lfloor\frac{n^{2(1-\gamma)}}{\alpha_k}-1\rfloor\rfloor}e^{j^\gamma}=\sum\limits_{j=1}^{n^2+\lfloor2^{-1}\lfloor\frac{n^{2(1-\gamma)}}{\alpha_k}-1\rfloor\rfloor}e^{j^\gamma}-\sum\limits_{j=1}^{n^2-1}e^{j^\gamma}.
		\end{equation}

		On one hand, thanks to the estimate (\ref{estimafunda}), it is easy to check that
		\begin{equation}\label{dgammaupp3}\frac{\sum\limits_{j\leq n^2+\lfloor 2^{-1}\lfloor\frac{n^{2(1-\gamma)}}{\alpha_k}-1\rfloor\rfloor}e^{j^\gamma}}{\sum\limits_{j\leq n^2+\lfloor n^{2(1-\gamma)}\rfloor }e^{j^\gamma}}\rightarrow e^{-\gamma(\frac{1}{2\alpha_k}-1)},\hbox{ as }n\longrightarrow +\infty.
		\end{equation}	
		On the other hand, thanks to the estimate (\ref{estimafunda}), it is easy to check again that
		\begin{equation}\label{dgammaupp4}\frac{\sum\limits_{j\leq n^2-1}e^{j^\gamma}}{\sum\limits_{j\leq n^2+\lfloor n^{2(1-\gamma)}\rfloor }e^{j^\gamma}}\rightarrow e^{-\gamma},\hbox{ as }n\longrightarrow +\infty.
		\end{equation}
		Combining (\ref{dgammaupp1}) with (\ref{dgammaupp2}), (\ref{dgammaupp3}) and (\ref{dgammaupp4}), we deduce
		$$\limsup_{n\rightarrow\infty}\left(\frac{1}{\sum\limits_{j\leq \max(\mathcal{B}_n)}e^{j^\gamma}}\sum\limits_{j\in \mathcal{B}_n;\atop j\leq \max(\mathcal{B}_n) }e^{j^\gamma}\right) \geq e^{-\gamma}\left(e^{\frac{1}{2\alpha_k}}-1\right)$$
		which implies $\overline{d}_{\beta^\gamma}(\mathcal{T}_k)>0$. This finishes the proof.
	\end{proof}
	\vskip3mm
	
	\noindent Finally we are ready to prove Theorem \ref{maintheorem}. 	 \\

	\noindent\textit{Proof of Theorem \ref{maintheorem}}\\
		Let $p>1$. 
		\begin{enumerate}[(a)]
			\item For $0\leq\gamma\leq 1/2$, we have $\alpha(p,\gamma)=\frac{1}{2}-\frac{1}{2\max(2,q)}$ which gives $\alpha(p,\gamma)=1/4$ if $p\geq 2$ and $1/(2p)$ if $1<p<2$. Thus the result is a direct consequence of Theorem \ref{sens1} and Theorem \ref{DFHC}.	
			\item For $1/2<\gamma < 1$, the result is a direct consequence of Theorem \ref{sens1} and the combination of Proposition \ref{sens21} with Proposition \ref{sens22}.
			\item For $\gamma=1$, the results is nothing more than Theorem \ref{DHC} already known.	
		\end{enumerate}
		For $p=1$, observe that, for all $0<\gamma\leq 1$, we have $\alpha(1,\gamma)=1/2$, hence the result immediately follows of Theorem \ref{DHC} and Theorem \ref{DFHC}.\\
		 \null\hfill$\square$
\vskip5mm

\begin{remark} {\rm Let $\delta$ be a lower or upper asymptotic density in the sense given by \cite{FrSa}. An operator $T:X\rightarrow X$, where $X$ is a separable Fr\'echet space, is said to be $\delta$-frequently hypercyclic if there exists $x\in X$ such that for every non-empty open subset $U\subset X$, $\delta(\{n\in\mathbb{N}\ :\ T^nx\in U\})>0.$ If we assume that, for every subset $E\subset\mathbb{N}$, the following inequality holds 
		$$\underline{d}(E)\leq\delta(E)\leq\overline{d}_{\beta^{1/2}}(E),$$
	then a frequently hypercyclic vector is $\delta$-frequently hypercyclic and a $\delta$-frequently hypercyclic vector is $\overline{d}_{\beta^{1/2}}$-frequently hypercyclic. Therefore, under the previous hypothesis, Theorems \ref{DFHC} and \ref{maintheorem} ensure that the $\delta$-frequently hypercyclic entire functions for the differentiation operator share the same optimal rate of growth as frequently hypercyclic entire functions. Examples of such densities can be found in \cite{ErMo1}.}
\end{remark}

\vskip10mm

\end{document}